\theoremstyle{plain}
\newtheorem{theorem}{Theorem}
\newtheorem{problem}{Problem}
\newtheorem{lemma}{Lemma}
\theoremstyle{definition}
\newtheorem{remark}{Remark}
\DeclareMathOperator{\curl}{curl}
\DeclareMathOperator{\const}{const}
\renewcommand{\Re}{\mathop{\mathrm{Re}}}
\renewcommand{\Im}{\mathop{\mathrm{Im}}}
\begin{document}

\begin{center}\Large\bfseries On the reconstruction of parameters of a moving fluid from the Dirichlet-to-Neumann map\end{center}
\begin{center}\it Alexey Agaltsov\footnote{CMAP, Ecole Polytechnique, France}\footnote{Moscow State University, Russia}\end{center}
\begin{center}\today\end{center}

\begin{quote}
We consider an inverse boundary value problem for a model time-harmonic equation of acoustic tomography of moving fluid with variable current velocity, sound speed, density and absorption. In the present article it is assumed that at fixed frequency the coefficients of this equation are already recovered modulo an appropriate gauge transformation using some reconstruction method from boundary measurements presented in the literature. Our main result consists in formulas and equations that allow to get rid of this gauge non-uniqueness and recover the fluid parameters using boundary measurements at several frequencies.
\medskip

\textbf{Keywords}: acoustic tomography of moving fluid, magnetic Schroedinger operators, inverse boundary value problems 

\textbf{Subjects}: partial differential equations, mathematical physics

\textbf{AMS classification}: 35R30 (Inverse problems), 35Q35 (PDEs in connection with fluid mechanics)
\end{quote}

\section{Introduction}
We consider a model equation for a time-harmonic acoustic pressure $\psi = \psi(x)$ (time dependence $e^{-i\omega t}$) in a moving fluid with sound speed $c = c(x)$, current velocity $v = v(x)$, density $\rho = \rho(x)$ and absorption $\alpha = \alpha(x,\omega) = \omega^{\zeta(x)}\alpha_0(x)$ at fixed frequency $\omega$:
\begin{gather}\label{in.waveeq}
  L_\omega \psi = 0 \quad \text{in $D$},\\
 \quad L_\omega = -\Delta - 2i A_\omega(x) \cdot \nabla - U_\omega(x), \quad x = (x_1,\ldots,x_d), \label{in.schrop}\\
  \nabla = \bigl( \tfrac{\partial}{\partial x_1},\ldots, \tfrac{\partial}{\partial x_d} \bigr), \quad \Delta = \tfrac{\partial^2}{\partial x_1^2} + \cdots+ \tfrac{\partial^2}{\partial x_d^2}, \notag
\end{gather} 
where
\begin{gather}
  \begin{gathered}\label{in.Dpro}
    \text{\it $D$ is an open bounded domain in $\mathbb R^d$,}\\
    \text{\it $d \in \{2,3\}$, with connected $C^\infty$ boundary $\partial D$}, 
  \end{gathered}\\
  A_\omega(x) = \frac{\omega v(x)}{c^2(x)} + i \nabla \ln \rho^{\frac 1 2}(x), \quad U_\omega(x) = \frac{\omega^2}{c^2(x)} + 2i\omega^{1+\zeta(x)}\frac{\alpha_0(x)}{c(x)}. \label{in.defAU}
\end{gather}
Note that the operator $L_\omega$ is a special case of the so-called magnetic Schroedinger operator.

The model equation \eqref{in.waveeq} was studied in different particular cases in \cite{Henk1988,Rous1994,Rych1996,Roum2009en,Bur2013en,Agal2015b,Agal2015c}.

In the present article we suppose that
\begin{subequations}
\begin{align}
  c & \in C^2(\overline D), \quad c > 0 \; \text{\it in $\overline D$}, \label{in.cpro}\\
  v & \in C^2(\overline D,\mathbb R^d), \label{in.vpro}\\
  \rho & \in C^{2,\beta}(\overline D), \; \beta\in(0,1], \quad \rho > 0 \; \text{\it in $\overline D$,} \label{in.rpro}\\
  \zeta & \in C(\overline D), \; \alpha_0 \in C(\overline D), \quad \text{\it $\zeta > 0$, $\alpha_0$ is real in $\overline D$}, \label{in.abpro}
\end{align}
\end{subequations}
where $\overline D = D \cup \partial D$ and $C^{k,\beta}(\overline D)$ denotes the space of $k$ times continuously differentiable functions in $\overline D$ whose $k$-th derivatives are $\beta$-H\"older continuous. Let
\begin{equation}
  \widetilde{c} = c|_{\partial D}, \quad \widetilde v = v|_{\partial D}, \quad \widetilde{\rho} = \rho|_{\partial D}.
\end{equation}
In what follows we always assume that 
\begin{equation}\label{in.unsol}
    \text{\it $0$ is not a Dirichlet eigenvalue for operator $L_\omega$ in $D$.}
\end{equation}
Note that the set of $\omega$'s for which \eqref{in.unsol} does not hold is locally finite. Besides, \eqref{in.unsol} always holds for $\omega = 0$.

For equation \eqref{in.waveeq} under the assumption \eqref{in.unsol} we consider the Dirichlet-to-Neumann type operator $\Lambda_\omega$ which maps a sufficiently regular function $f$ on $\partial D$ to the function
\begin{equation*}
  \Lambda_\omega f = \bigl(\tfrac{\partial \psi}{\partial \nu} + i(\nu \cdot A_\omega) \psi \bigr)\bigr|_{\partial D},
\end{equation*}
where $\psi$ is the solution of equation \eqref{in.waveeq} in $D$ with Dirichlet boundary condition $\psi|_{\partial D} = f$ and $\nu$ is the unit exterior normal field to $\partial D$.

Note that it is possible to get rid of the assumption \eqref{in.unsol} by considering a general Robin-to-Robin map instead of the Dirichlet-to-Neumann map, see \cite{Isay2013}.

We consider the following problem:
\begin{problem}\label{in.probD2N} Find $c$, $v$, $\rho$ and $\alpha$ in $D$ from $\Lambda_\omega$ given for $\omega$ in some fixed set $\Omega$ and from $\widetilde c$, $\widetilde v$ and $\widetilde \rho$.
\end{problem}
This problem was studied, in particular, in \cite{Agal2015b,Agal2015c}. In these works it was shown that
\begin{enumerate}
 \item[(a)] if $\rho \equiv \const$, $\alpha_0 \equiv 0$ and $\Omega = \{\omega_1\}$ then the Problem \ref{in.probD2N} is uniquely solvable;
 \item[(b)] if $\alpha_0 \equiv 0$, $\Omega = \{\omega_1,\omega_2\}$, $\omega_1 < \omega_2$, then the Problem \ref{in.probD2N} is uniquely solvable;
 \item[(c)] if $\Omega = \{\omega_1,\omega_2,\omega_3\}$, $\omega_1 < \omega_2 < \omega_3$, and $\zeta \neq 0$ in $D$ then the Problem \ref{in.probD2N} is uniquely solvable.
\end{enumerate}

Reconstruction results for Problem \ref{in.probD2N} at fixed $\omega$ can be summarized as follows:
\begin{enumerate}
 \item Uniqueness modulo gauge transformations in the most general case follows from the results of \cite{Guil2011} in dimension $d=2$ and of \cite{Krup2014} in dimension $d=3$.
 \item An approximate reconstruction algorithm modulo gauge transformations in dimension $d=2$ was developed in \cite{Agal2014b,Agal2015a}.
 \item A possible reconstruction approach modulo gauge transformations in dimension $d=3$ was outlined in \cite{Henk1988}.
\end{enumerate}

In the present article we assume that the coefficients $A_\omega$ and $U_\omega$ of equation \eqref{in.waveeq} are already recovered from $\Lambda_\omega$ up to a gauge transformation using some appropriate method at fixed $\omega$. The goal of the present work is to show how to get rid of the gauge non-uniqueness and recover $c$, $v$, $\rho$ and $\alpha$ using boundary measurements at several frequencies. In this respect, the present article can be considered as a development of the article \cite{Agal2015b} where the particular case $\rho \equiv \const$, $\alpha_0 \equiv 0$, $\Omega = \{\omega_1\}$ was considered, and of the article \cite{Agal2015c} where the corresponding uniqueness theorems for equation \eqref{in.waveeq} with general operator of the form \eqref{in.schrop} were obtained.

For a vector field $V = (V_1,\ldots,V_d)$ and a function $f$ in $D$ we set by definition
\begin{equation}
  \begin{aligned}
  \curl V & = \begin{cases}
              \partial_1 V_2 - \partial_2 V_1, \quad d = 2, \\
              \bigl( \partial_2 V_3 - \partial_3 V_2, \partial_3 V_1 - \partial_1 V_3, \partial_1 V_2 - \partial_2 V_1 \bigr), \quad d = 3, \\
            \end{cases}\\
  \curl f & = \bigl( \partial_2 f, -\partial_1 f\bigr), \quad d = 2,
  \end{aligned}
\end{equation}
where $\partial_j = \partial/\partial_{x_j}$.

In the present article it is assumed that the following functions are already recovered from the operator $\Lambda_\omega$ at fixed $\omega$:
\begin{align}
  F & = \curl \bigl( \tfrac{v}{c^2} \bigr) \quad \text{\it in $D$}, \label{in.defF}\\
  q_\omega & =  f_1 - \omega^2 f_2 + i \omega f_3 - 2i\omega^{1+\zeta}\tfrac{\alpha_0}{c} \quad \text{\it in $D$}, \label{in.defq}
\end{align}
where
\begin{equation}\label{in.deff}
  f_1 = \rho^\frac{1}{2} \Delta \rho^{-\frac 1 2}, \quad f_2 = \tfrac{1}{c^2}+\tfrac{v}{c^2}\cdot\tfrac{v}{c^2}, \quad  f_3 = \nabla \cdot \bigl(\tfrac{v}{c^2} \bigr) - \tfrac{v \cdot \nabla \ln \rho}{c^2}.
\end{equation}
For the corresponding identifiability results see \cite{Guil2011} (for $d = 2$) and \cite{Krup2014} (for $d = 3$); for an approximate reconstruction algorithm see \cite{Agal2014b,Agal2015a}.

Thereby, in the present article we study the following problems.
\begin{problem}\label{in.probgtna} Find $c$, $v$ and $\rho$ in $D$ from $q_\omega$ given for $\omega \in \Omega = \{\omega_1,\omega_2\}$, $\omega_1 < \omega_2$, and from $F$, $\widetilde c$, $\widetilde v$, $\widetilde \rho$. 
\end{problem}
\begin{problem}\label{in.probgtab} Find $c$, $v$, $\rho$, $\zeta$ and $\alpha_0$ in $D$ from $q_\omega$ given for $\omega \in \Omega = \{\omega_1,\omega_2,\omega_3\}$, $\omega_1 < \omega_2 < \omega_3$, and from $F$, $\widetilde c$, $\widetilde v$, $\widetilde \rho$. 
\end{problem}

\section{Solution of Problem \ref{in.probgtna}}
We are going to derive the explicit formulas for solving the Problem \ref{in.probgtna}. We consider \eqref{in.defq} with $\omega \in \Omega$ as a system of linear equations for $f_1$, $f_2$ and $f_3$. Solving this system we obtain
\begin{equation}\label{in.expf}
  f_1 = \frac{\omega_2^2 \Re q_{\omega_2} - \omega_1^2 \Re q_{\omega_1}}{\omega_2^2 - \omega_1^2}, \; f_2 = \frac{\Re q_{\omega_1} - \Re q_{\omega_2}}{\omega_2^2-\omega_1^2}, \; f_3 = \omega_1^{-1} \Im q_{\omega_1}.
\end{equation}
Set $g = \rho^{-\frac 1 2}$. It follows from formula \eqref{in.deff} that $g$ satisfies the following equation
\begin{gather}\label{in.rhoeq}
  g(x) = g_0(x) + \int_D G(x,y) f_1(y) g(y) \, dy, \quad x \in \overline D,\\
  g_0(x) = \int_{\partial D} \frac{\partial G(x,y)}{\partial \nu_y} \tilde \rho^{-\frac 1 2}(y) \, dy,
\end{gather}
where $G(x,y)$ is the (non-positive) Dirichlet Green's function for operator $\Delta$ in $D$ and $\nu_y$ is the unit exterior normal to $\partial D$ at point $y$. Note that $g_0$ is just the the harmonic extension of $\widetilde \rho^{-\frac 1 2}$ to $D$. The existence of function $G$ follows from assumption \eqref{in.Dpro} and from \cite[Theorem 4.17, p. 112]{Aub1982}. 

\begin{lemma} Equation \eqref{in.rhoeq} is uniquely solvable for $g \in C(\overline D)$. 
\end{lemma}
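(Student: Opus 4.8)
The plan is to treat \eqref{in.rhoeq} as a Fredholm integral equation of the second kind in the Banach space $C(\overline D)$. Define the integral operator $T$ by
\[
  (Tg)(x) = \int_D G(x,y) f_1(y) g(y)\, dy, \quad x \in \overline D,
\]
so that \eqref{in.rhoeq} reads $(I-T)g = g_0$. Since $\rho \in C^{2,\beta}(\overline D)$ with $\rho > 0$, the coefficient $f_1 = \rho^{\frac 1 2}\Delta\rho^{-\frac 1 2}$ belongs to $C(\overline D)$ (indeed to $C^{0,\beta}(\overline D)$), and the Dirichlet Green's function $G(x,y)$ has only a weak (integrable) singularity on the diagonal, of order $|x-y|^{2-d}$ for $d=3$ and $\ln|x-y|$ for $d=2$. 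Standard estimates for weakly singular kernels then show that $T$ maps $C(\overline D)$ into itself and is compact. By the Fredholm alternative, \eqref{in.rhoeq} is uniquely solvable for every right-hand side $g_0$ as soon as the homogeneous equation $g = Tg$ admits only the trivial solution $g \equiv 0$; this is what I would establish.

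To analyze the homogeneous equation I would use the defining properties of the Green's function, namely $\Delta_x G(x,y) = \delta(x-y)$ together with $G(x,y)=0$ for $x \in \partial D$. A continuous solution $g$ of $g = Tg$ is the Green potential of $f_1 g$, so by elliptic (Schauder) regularity it is as smooth as the data allow and satisfies
\[
  \Delta g = f_1 g \quad \text{in } D, \qquad g|_{\partial D} = 0,
\]
the boundary condition coming directly from $G(x,y)=0$ on $\partial D$. Thus it suffices to prove that $0$ is not a Dirichlet eigenvalue of the Schr\"odinger operator $-\Delta + f_1$ in $D$.

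The key observation is that this operator possesses an explicit positive solution: setting $g_* = \rho^{-\frac 1 2}$, the very definition $f_1 = \rho^{\frac 1 2}\Delta\rho^{-\frac 1 2}$ gives $\Delta g_* = f_1 g_*$ in $D$, with $g_* > 0$ on $\overline D$. I would exploit this ground-state structure by the substitution $g = g_* w$. A direct computation using $\Delta g_* = f_1 g_*$ eliminates the zeroth-order term and recasts the equation $\Delta g - f_1 g = 0$ in the divergence form
\[
  \nabla\cdot\bigl(g_*^2\, \nabla w\bigr) = 0 \quad \text{in } D,
\]
while the boundary condition becomes $w|_{\partial D} = 0$ because $g_* > 0$. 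Multiplying by $w$ and integrating by parts then yields $\int_D g_*^2 |\nabla w|^2\, dx = 0$, so $w$ is constant and, being zero on $\partial D$, vanishes identically. Hence $g \equiv 0$, the homogeneous equation has only the trivial solution, and unique solvability follows from the Fredholm alternative.

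I expect the main technical point to be the passage from the integral identity to the boundary value problem with enough regularity to integrate by parts, i.e.\ the elliptic-regularity bootstrap that places $g$ in $H^1_0(D)$ or in $C^2(D)\cap C(\overline D)$; the algebraic core of the argument --- the factorization through the positive solution $g_* = \rho^{-\frac 1 2}$ and the resulting energy identity --- is short and is precisely where the special structure of $f_1$ is used.
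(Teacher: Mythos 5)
Your proposal is correct, and its core is the same as the paper's: both arguments reduce the question to the homogeneous integral equation, upgrade it by potential-theoretic regularity to the boundary value problem $\Delta h = f_1 h$ in $D$, $h|_{\partial D}=0$, and then exploit the fact that $\rho^{-\frac 1 2}$ is an exact positive solution of that same equation by factoring it out --- the paper's substitution $u=\rho^{\frac 1 2}h$ is precisely your $w = g/g_*$. The differences are in the framing and in the endgame, and they are worth noting. First, the paper never invokes the Fredholm alternative: existence is free, because equation \eqref{in.rhoeq} was obtained from Green's representation of $\rho^{-\frac 1 2}$ in the first place, so $\rho^{-\frac 1 2}$ is a solution by construction and only uniqueness requires proof; your route additionally needs compactness of $T$ on $C(\overline D)$ (standard for the weakly singular kernel $G$), and in exchange it yields solvability for an arbitrary continuous right-hand side, which is more than the lemma asks. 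Second, after the substitution the paper keeps the transformed equation in non-divergence form, $-\Delta u + \nabla\ln\rho\cdot\nabla u = 0$ (the displayed equation \eqref{sol.hdeq2} shows $\nabla\rho$ in place of $\nabla\ln\rho$, apparently a typo, inessential for the argument), and concludes with the maximum principle, which applies directly to $u \in C^2(D)\cap C(\overline D)$ with zero boundary values; you instead pass to the divergence form $\nabla\cdot(g_*^2\nabla w)=0$ and conclude with an energy identity. Your ending requires exactly the boundary regularity you flag as the main technical point --- e.g.\ $g\in H^1_0(D)$, which one can get from $W^{2,p}$ estimates since $f_1 g\in L^\infty(D)$, or alternatively an exhaustion of $D$ by compact subdomains --- whereas the maximum-principle ending needs no regularity up to the boundary beyond continuity; this is the one place where the paper's argument is genuinely lighter than yours.
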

\begin{proof} Suppose that $g_1$, $g_2 \in C(\overline D)$ are two solutions of equation \eqref{in.rhoeq}. Then their difference $h = g_1 - g_2$ satisfies
\begin{equation}\label{in.heq}
  h(x) = \int_D G(x,y) f_1(y) h(y) \, dy, \quad x \in \overline D.
\end{equation}
Using formulas \eqref{in.rpro} and \eqref{in.deff} we obtain that $f_1 \in C^{0,\beta}(D)$. Taking this into account and using formula \eqref{in.heq} and Lemma \cite[Lemma 4.2]{Gilb2001}, we obtain that $h \in C^2(D) \cap C(\overline D)$ and that
\begin{subequations}
\begin{align}
  & -\Delta h + f_1(x) h = 0 \quad \text{\it in $D$}, \label{sol.hdeq} \\ 
  & h|_{\partial D} = 0.  \label{sol.hbnd}
\end{align}
\end{subequations}
Using formulas \eqref{in.expf} and \eqref{sol.hdeq} we can rewrite equation \eqref{sol.hdeq} as 
\begin{equation}\label{sol.hdeq2}
  -\Delta (\rho^{\frac 1 2} h) + \nabla \rho \cdot \nabla (\rho^{\frac 1 2} h) = 0, \quad \text{\it in $D$}.
\end{equation}
It follows from \cite[Lemma 4.2 and Corollary 8.2]{Gilb2001} and from formulas \eqref{sol.hbnd} and \eqref{sol.hdeq2} that $h \equiv 0$. Hence, $\rho^{-\frac 1 2}$ is the unique solution of class $C(\overline D)$ to equation~\eqref{in.rhoeq}. 
\end{proof}

For vectors $a = (a_1,\ldots,a_d)$ and $b = (b_1,\ldots,b_d)$ in $D$ we put by definition
\begin{equation}
  a \times b = \begin{cases}
                  a_1 b_2 - a_2 b_1, \quad d = 2,\\
                  \bigl( a_2 b_3 - a_3 b_2, a_3 b_1 - a_1 b_3, a_1 b_2 - a_2 b_1 \bigr), \quad d = 3.
               \end{cases}
\end{equation}
Recall that the Helmholtz decomposition of the vector field $\tfrac{v}{c^2}$ is given by the following formula:
\begin{equation}\label{sol.Helm}
  \frac{v}{c^2} = \nabla \Phi - \curl V \quad \text{\it in $D$},
\end{equation}
where 
\begin{gather}
  \Phi(x) = \int_D G_0(x-y) \nabla_y \cdot \bigl(\tfrac{v(y)}{c^2(y)}\bigr)\,dy -\int_{\partial D} G_0(x-y) \frac{\nu_y \cdot \widetilde v(y)}{\widetilde c^2(y)} \, dy,\\
  V(x) = \int_D G_0(x-y) F(y) \, dy - \int_{\partial D} G_0(x-y) \frac{\nu_y \times \widetilde v(y)}{\widetilde c^2(y)} \, dy, \label{in.defV}\\
  G_0(x) = -\frac{1}{2\pi}\ln|x|, \quad d = 2,\\
  G_0(x) = \frac{1}{4\pi} \frac{1}{|x|}, \quad d = 3,
\end{gather}
where $\nu_y$ is the unit exterior normal to $\partial D$ at point $y$. Note that the vector field $V$ is known since $F$, $\widetilde v$ and $\widetilde c$ are given. 

Using formula \eqref{sol.Helm} we can recover the function $\Phi|_{\partial D}$ modulo an additive constant (which does not matter). Fix a point $x^0 \in \partial D$. Let $x \colon [0,1] \to \partial D$ be a smooth curve linking $x^0$ to some given point $x \in \partial D$. Then
\begin{equation}\label{sol.PhiOnBnd}
  \Phi(x)-\Phi(x^0) = \int_0^1 \bigl(\tfrac{\widetilde v}{\widetilde c^2} + \curl V\bigr)|_{x(t)} \cdot \dot x(t) \, dt, \quad \dot x = \frac{dx}{dt}.
\end{equation}

It follows from formulas \eqref{in.deff} and \eqref{sol.Helm} that $\Phi$ satisfies the equation
\begin{equation*}
  \begin{gathered}
  -\Delta \Phi + \nabla \ln \rho \cdot \nabla \Phi = -f_3 + \curl V \cdot \nabla \ln \rho \quad \text{\it in $D$}, \\
  \text{\it or} \; -\Delta \eta + f_1(y) \eta =  \rho^{-\frac 1 2} (-f_3 + \curl V \cdot \nabla \ln \rho) \quad \text{\it in $D$},
  \end{gathered}
\end{equation*}
where $\eta = \rho^{-\frac 1 2}(\Phi-\Phi(x^0))$. The function $\eta$ can be found from the following integral equation:
\begin{gather}
  \eta(x) = \eta_0(x) + \eta_1(x) + \int_D G(x,y) f_1(y) \eta(y) \, dy, \quad x \in \overline D, \label{in.etaeq}\\
  \eta_0(x) = \int_D G(x,y) \rho^{-\frac 1 2}(y) \bigl( f_3(y)-\curl V(y) \cdot \nabla \ln \rho(y) \bigr) \, dy, \notag\\
  \eta_1(x) = \int_{\partial D} \frac{\partial G(x,y)}{\partial \nu_y} \tilde \rho^{-\frac 1 2}(\Phi(y)-\Phi(x^0)) \, dy, \notag
\end{gather}
where $G(x,y)$ is the (non-positive) Dirichlet Green's function for $\Delta$ in $D$. Note that $\eta_1$ is just the harmonic extension of $\tilde \rho^{-\frac 1 2}(\Phi|_{\partial D}-\Phi(x^0))$ to $D$. Also note that equation \eqref{in.etaeq} has the same kernel as equation \eqref{in.rhoeq}. Therefore, it is uniquely solvable for $\eta \in C(\overline D)$.

After recovering $\Phi - \Phi(x^0)$ in $D$ we can find $\tfrac{v}{c^2}$ using formula \eqref{sol.Helm}. Finally, using formulas \eqref{in.deff} and \eqref{sol.Helm} we obtain
\begin{equation}\label{in.expcv}
  \frac{1}{c^2} = f_2 - (\nabla \Phi - \curl V)^2, \quad v = c^2 (\nabla \Phi - \curl V).
\end{equation}
The described algorithm for solving Problem \ref{in.probgtna} is summarized in the following theorem.

\begin{theorem}\label{propna} Suppose that $D$ satisfies \eqref{in.Dpro}, $\Omega = \{\omega_1,\omega_2\}$, $\omega_1 \neq \omega_2$ and \eqref{in.unsol} holds for all $\omega \in \Omega$, and suppose that $c$, $v$, $\rho$ satisfy \eqref{in.cpro}--\eqref{in.rpro}. Then Problem \ref{in.probgtna} can be solved as follows:
\begin{enumerate}
 \item Define $f_1$, $f_2$, $f_3$ and $V$ using formulas \eqref{in.expf} and \eqref{in.defV}.
 \item Find $g$ as the unique solution of class $C(\overline D)$ to equation \eqref{in.rhoeq}. Set $\rho = g^{-2}$.
 \item Fix $x^0 \in \partial D$ and find $\Phi|_{\partial D}-\Phi(x^0)$ using formula \eqref{sol.PhiOnBnd}.
 \item Find $\eta$ as the unique solution of class $C(\overline D)$ to equation \eqref{in.etaeq}. Set $\Phi - \Phi(x^0) = \rho^{\frac 1 2}\eta$.
 \item Find $c$ and $v$ using the explicit formulas \eqref{in.expcv}.
\end{enumerate}
\end{theorem}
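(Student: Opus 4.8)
The plan is to show that each of the five steps is well posed and that, taken together, they return the true $c$, $v$, $\rho$; since the formulas and integral equations have already been derived above, the argument reduces to checking invertibility and consistency at each stage.

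First I would treat Step~1. In the setting of Problem~\ref{in.probgtna} absorption is absent, so the functions $f_1$, $f_2$, $f_3$ of \eqref{in.deff} are real and \eqref{in.defq} splits into $\Re q_\omega = f_1 - \omega^2 f_2$ and $\Im q_\omega = \omega f_3$. Evaluating the real part at $\omega_1$ and $\omega_2$ gives a $2\times 2$ linear system for $f_1$, $f_2$ with determinant $\omega_2^2 - \omega_1^2 \neq 0$ (as $\omega_1 \neq \omega_2$), whose solution together with $f_3 = \omega_1^{-1}\Im q_{\omega_1}$ is exactly \eqref{in.expf}; the field $V$ is then read off directly from the data $F$, $\widetilde v$, $\widetilde c$ via \eqref{in.defV}.

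Next, for Step~2 I would recall that $g = \rho^{-1/2}$ solves $\Delta g = f_1 g$ in $D$ with $g|_{\partial D} = \widetilde\rho^{-1/2}$; recasting this boundary value problem through the Dirichlet Green's function produces \eqref{in.rhoeq}, and the Lemma above furnishes a unique $g \in C(\overline D)$, so $\rho = g^{-2}$ is determined. For Steps~3 and~4 I would invoke the Helmholtz decomposition \eqref{sol.Helm}. On $\partial D$ the trace of $v/c^2$ equals $\widetilde v/\widetilde c^2$ and $V$ is known, so the tangential derivative of $\Phi$ along $\partial D$ is determined and integration along a boundary curve yields $\Phi|_{\partial D} - \Phi(x^0)$ as in \eqref{sol.PhiOnBnd}. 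Taking the divergence of \eqref{sol.Helm} and substituting $f_3$ from \eqref{in.deff} gives an elliptic equation for $\Phi$; after the change of unknown $\eta = \rho^{-1/2}(\Phi - \Phi(x^0))$ this becomes \eqref{in.etaeq}, which shares the kernel of \eqref{in.rhoeq} and is therefore uniquely solvable, so $\Phi - \Phi(x^0) = \rho^{1/2}\eta$ and hence $\nabla\Phi - \curl V = v/c^2$ is recovered in $D$. The algebraic identities \eqref{in.expcv}, coming from $f_2 = c^{-2} + (v/c^2)\cdot(v/c^2)$, then give $c$ and finally $v$.

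The step I expect to require the most care is the interior recovery of $\Phi$: one must verify that the elliptic equation for $\eta$ genuinely collapses to an integral equation governed by the same operator as \eqref{in.rhoeq} --- so that the uniqueness from the Lemma transfers --- and that the inhomogeneous term $\eta_1$ faithfully encodes the harmonic extension of the boundary data found in Step~3. Once $f_1$, $f_2$, $f_3$ and $\Phi - \Phi(x^0)$ are in hand, the remaining manipulations are purely algebraic and the additive ambiguity $\Phi(x^0)$ is harmless since only $\nabla\Phi$ enters \eqref{in.expcv}.
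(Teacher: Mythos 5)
Your proposal is correct and takes essentially the same route as the paper's own derivation in Section 2: the same $2\times 2$ linear-system inversion for $f_1,f_2,f_3$, the same integral equations \eqref{in.rhoeq} and \eqref{in.etaeq} with uniqueness supplied by the Lemma, the same Helmholtz decomposition with the boundary integration \eqref{sol.PhiOnBnd}, and the same closing algebra \eqref{in.expcv}. Incidentally, carrying out the elimination you describe yields $f_1 = \bigl(\omega_2^2 \Re q_{\omega_1} - \omega_1^2 \Re q_{\omega_2}\bigr)/\bigl(\omega_2^2 - \omega_1^2\bigr)$, so the paper's printed expression for $f_1$ in \eqref{in.expf} has the subscripts of $q$ transposed; your derivation is the correct one, and claiming it reproduces \eqref{in.expf} verbatim is the only (inherited) inaccuracy.
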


\begin{remark} Suppose that $d = 2$ and $D = \bigl\{ x \in \mathbb R^2 \mid |x| \leq 1\}$. Then the function $g_0$ from formula \eqref{in.rhoeq} and the function $G(x,y)$ from formulas \eqref{in.rhoeq} and \eqref{in.etaeq} can be found explicitly:
\begin{equation*}
  g_0(x) = \int\limits_{S^1} \widetilde \rho^{-\frac 1 2}(\vartheta) \frac{1-|x|^2}{|\vartheta - x|^2} \frac{d\vartheta}{2\pi}, \quad G(x,y) = \frac{1}{2\pi} \ln \frac{|x||y-x|}{\bigl|y |x|^2 - x\bigr|},
\end{equation*}
where $S^1 = \partial D$. Furthermore, if $\|f_1\|_{C(\overline D)} < 4$ then equations \eqref{in.rhoeq} and \eqref{in.etaeq} can be solved using the method of successive approximations in $C(\overline D)$.
\end{remark}

\section{Solution of Problem \ref{in.probgtab}}
Define the sets $D_0$ and $D_1$ by the formulas
\begin{equation}\label{in.defD0}
  D_0 = \bigl\{ x \in D \mid \tfrac{\Im q_{\omega_1}(x)}{\omega_1} = \tfrac{\Im q_{\omega_2}(x)}{\omega_2} \bigr\}, \quad D_1 = D \setminus D_0.
\end{equation}
It follows from formulas \eqref{in.abpro} and \eqref{in.defq} that $D_0 = \{ x \in D \mid \alpha_0(x) = 0 \}$ and that the functions $f_1$, $f_2$ and $f_3$ defined in \eqref{in.defq} can be found in $D_0$ using formulas \eqref{in.expf}.

Using formula \eqref{in.defq} for $x \in D_1$ and $\omega \in \Omega$ we obtain that
\begin{equation}\label{in.zetaeq}
  \frac{\omega_2^{-1}\Im q_{\omega_2}(x)-\omega_1^{-1}\Im q_{\omega_1}(x)}{\omega_3^{-1}\Im q_{\omega_3}(x) - \omega_1^{-1}\Im q_{\omega_1}(x)} = \frac{\bigl(\tfrac{\omega_2}{\omega_1}\bigr)^{\zeta(x)}-1}{\bigl(\tfrac{\omega_3}{\omega_1}\bigr)^{\zeta(x)}-1}.
\end{equation}
We consider \eqref{in.zetaeq} as an equation for finding $\zeta(x)$ at fixed $x \in D_1$. This equation is uniquely solvable for $\zeta(x)$ at fixed $x \in D_1$ as the following lemma shows.
\begin{lemma} The right side of equation \eqref{in.zetaeq} at fixed $x \in D_1$ is a strictly decreasing function of $\zeta(x) \in (0,+\infty)$. 
\end{lemma}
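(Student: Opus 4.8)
The plan is to reduce the claim to a single elementary monotonicity inequality. First I would introduce the abbreviations $a = \omega_2/\omega_1$ and $b = \omega_3/\omega_1$; since $\omega_1 < \omega_2 < \omega_3$, we have $b > a > 1$. Writing $\zeta$ for $\zeta(x)$ and denoting the right-hand side of \eqref{in.zetaeq} by
\[
  \phi(\zeta) = \frac{a^\zeta - 1}{b^\zeta - 1},
\]
I note that $\phi(\zeta) > 0$ for $\zeta \in (0,+\infty)$, because $a^\zeta > 1$ and $b^\zeta > 1$. Hence it suffices to show that $\ln\phi$ is strictly decreasing, that is, $(\ln\phi)'(\zeta) < 0$. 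Differentiating the identity $\ln\phi(\zeta) = \ln(a^\zeta-1) - \ln(b^\zeta-1)$ gives
\[
  (\ln\phi)'(\zeta) = \psi(a) - \psi(b), \qquad \psi(c) := \frac{c^\zeta \ln c}{c^\zeta - 1},
\]
so the whole statement amounts to the inequality $\psi(a) < \psi(b)$; since $a < b$, it is enough to prove that $\psi(c)$ is strictly increasing in $c$ on $(1,+\infty)$ for each fixed $\zeta > 0$.

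To handle this reduced problem I would substitute $y = c^\zeta$, which is a strictly increasing bijection of $(1,+\infty)$ onto itself for fixed $\zeta > 0$, and rewrite
\[
  \psi(c) = \frac{1}{\zeta}\cdot\frac{y \ln y}{y - 1}.
\]
In this way the two frequency ratios $a$ and $b$ are collapsed into a comparison of one and the same function, and everything comes down to the monotonicity of $k(y) = \dfrac{y\ln y}{y-1}$ on $(1,+\infty)$. A direct computation of $k'(y)$ shows that its numerator equals $(y-1) - \ln y$, which is strictly positive for $y > 1$ by the elementary inequality $\ln y < y - 1$. Therefore $k' > 0$, so $k$ and hence $\psi$ are strictly increasing, giving $\psi(a) < \psi(b)$, $(\ln\phi)'(\zeta) < 0$, and finally the strict decrease of $\phi$.

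The genuine content lies in the middle step: choosing the substitution $y = c^\zeta$ that turns the two-parameter expression into a comparison of one monotone function $k(y)$, and then recognizing that the sign of $k'$ is governed by the standard inequality $\ln y < y - 1$. Once this reduction is found, the remaining computation is routine and the conclusion is immediate. I expect no difficulty at the endpoints, since strict monotonicity is only claimed on the open interval $(0,+\infty)$, where $a^\zeta - 1$ and $b^\zeta - 1$ stay strictly positive and the logarithmic derivative is well defined.
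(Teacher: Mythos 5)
Your proof is correct, but it follows a genuinely different route from the paper. You establish strict decrease directly by logarithmic differentiation: writing the right-hand side as $\phi(\zeta) = \frac{a^\zeta-1}{b^\zeta-1}$, you reduce $(\ln\phi)'(\zeta) < 0$ via the substitution $y = c^\zeta$ to the monotonicity of $k(y) = \frac{y\ln y}{y-1}$, whose derivative has numerator $(y-1)-\ln y > 0$; every step checks out. The paper instead avoids calculus altogether: it observes that strict monotonicity (really, the injectivity needed for unique solvability of the equation for $\zeta(x)$) follows once one shows that
\begin{equation*}
  \frac{p^t-1}{q^t-1} = \frac{p-1}{q-1}, \qquad 1 < p < q, \; t > 1,
\end{equation*}
is impossible --- take $p = a^{\zeta_1}$, $q = b^{\zeta_1}$, $t = \zeta_2/\zeta_1$ for $\zeta_1 < \zeta_2$ --- and rules this equality out by writing $p = \lambda_1 q + \lambda_2 \cdot 1$ as a convex combination and invoking the strict convexity of $s \mapsto s^t$. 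Each approach has its advantages: your derivative computation is self-contained and pins down the \emph{direction} of monotonicity (decreasing), which the paper's injectivity argument does not address by itself --- the paper implicitly relies on continuity to upgrade injectivity to strict monotonicity, and on limiting behaviour (or simply on not needing the direction) for the rest; conversely, the paper's convexity argument is derivative-free, shorter, and delivers exactly what the algorithm in Theorem \ref{propab} uses, namely that equation \eqref{in.zetaeq} has at most one solution $\zeta(x) > 0$.
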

\begin{proof} It is sufficient to show that for any $p$, $q$ such that $1 < p < q$ and for any $t > 1$ the equality
\begin{equation}\label{in.ptqt}
  \frac{p^t-1}{q^t-1} = \frac{p-1}{q-1}.
\end{equation}
can not hold. Assuming the equality \eqref{in.ptqt}, we define
\begin{equation}\label{in.deflam}
  \lambda_1 = \frac{p-1}{q-1}, \quad \lambda_2 = \frac{q-p}{q-1}.
\end{equation}
Using formulas \eqref{in.ptqt} and \eqref{in.deflam} we obtain the formulas
\begin{subequations}
\begin{gather}
    \lambda_1 + \lambda_2 = 1, \quad \lambda_1 > 0, \; \lambda_2 > 0, \label{in.lamsum}\\
    \lambda_1 q + \lambda_2 = p, \quad \lambda_1 q^t + \lambda_2 = p^t. \label{in.lamcc}
\end{gather}
\end{subequations}
Since the function $f(s) = s^t$ is strictly convex, relations \eqref{in.lamsum}--\eqref{in.lamcc} can not hold. Therefore the initial assumption that \eqref{in.ptqt} holds must be false. 
\end{proof}

Next we find functions $f_1$, $f_2$, $f_3$ and $\alpha_0/c$ in the domain $D_1$. It follows from formula \eqref{in.defq} that $f_1$ and $f_2$ in $D_1$ can be found using formulas \eqref{in.expf}. It also follows from \eqref{in.defq} that $f_3$ and $\alpha_0/c$ in $D_1$ can be found from the following formulas:
\begin{equation}\label{in.alexp}
  f_3 = \frac{\frac{\omega_1^\zeta}{\omega_2}\Im q_{\omega_2} - \frac{\omega_2^\zeta}{\omega_1} \Im q_{\omega_1}}{\omega_1^\zeta - \omega_2^\zeta}, \quad \frac{\alpha_0}{c} = \frac 1 2\frac{\omega_2^{-1}\Im q_{\omega_2} - \omega_1^{-1} \Im q_{\omega_1}}{\omega_1^\zeta - \omega_2^\zeta}.
\end{equation}
Using the values of $f_1$, $f_2$ and $f_3$ in $D$ we can find $c$, $\rho$ and $v$ in $D$ using the steps 2--5 mentioned in Theorem \ref{propna}. Finally, we find $\alpha_0$ using $\alpha_0/c$ and $c$. The algorithm for solving Problem \ref{in.probgtab} is summarized in the following proposition.

\begin{theorem}\label{propab} Suppose that $D$ satisfies \eqref{in.Dpro}, $\Omega = \{\omega_1,\omega_2,\omega_3\}$, $\omega_1 < \omega_2 < \omega_3$, and \eqref{in.unsol} holds for all $\omega \in \Omega$. Suppose also that $c$, $v$, $\rho$, $\zeta$ and $\alpha_0$ satisfy \eqref{in.cpro}--\eqref{in.abpro}. Then Problem \ref{in.probgtab} can be solved as follows:
\begin{enumerate}
 \item Define $D_0$ and $D_1$ using formula \eqref{in.defD0}. Find $f_1$ and $f_2$ in $D$ using formulas \eqref{in.expf}. Find $f_3$ using formula \eqref{in.expf} in $D_0$ and formula \eqref{in.alexp} in $D_1$.
 \item Find $\zeta(x)$ at fixed $x \in D_1$ as the unique positive solution to equation \eqref{in.zetaeq}.
 \item Find $\alpha_0/c$ in $D_1$ using formula \eqref{in.alexp}.
 \item Find $g$ as the unique solution of class $C(\overline D)$ to equation \eqref{in.rhoeq}. Set $\rho = g^{-2}$.
 \item Fix $x^0 \in \partial D$ and find $\Phi|_{\partial D}-\Phi(x^0)$ using formula \eqref{sol.PhiOnBnd}.
 \item Find $\eta$ as the unique solution of class $C(\overline D)$ to equation \eqref{in.etaeq}. Set $\Phi-\Phi(x^0) = \rho^{\frac 1 2}\eta$.
 \item Find $c$ and $v$ using the explicit formulas \eqref{in.expcv}. Set $\alpha_0$ to zero in $D_0$ and find $\alpha_0$ from $\alpha_0/c$ and $c$ in $D_1$.
\end{enumerate}
\end{theorem}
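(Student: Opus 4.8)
The plan is to prove correctness of the stated seven-step algorithm: I would verify that each step recovers the true quantity and that the required unique solvabilities hold, so that the genuinely new content reduces to the separation of $D_0$ and $D_1$ and the pointwise determination of $\zeta$, after which the recovery of $c$, $v$, $\rho$ is inherited verbatim from Theorem \ref{propna}.

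First I would justify the decomposition \eqref{in.defD0}. From \eqref{in.defq} and \eqref{in.abpro} one has $\omega^{-1}\Im q_\omega = f_3 - 2\omega^{\zeta}\tfrac{\alpha_0}{c}$, so the defining condition $\omega_1^{-1}\Im q_{\omega_1} = \omega_2^{-1}\Im q_{\omega_2}$ holds exactly when $(\omega_1^\zeta - \omega_2^\zeta)\tfrac{\alpha_0}{c} = 0$; since $\omega_1 \neq \omega_2$, $\zeta > 0$ and $c > 0$, this is equivalent to $\alpha_0 = 0$, giving $D_0 = \{\alpha_0 = 0\}$ and $D_1 = D \setminus D_0$. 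Next I would carry out the algebraic inversion of \eqref{in.defq}. Its real part reads $\Re q_\omega = f_1 - \omega^2 f_2$, a linear system in $(f_1, f_2)$ whose two-frequency restriction has nonzero determinant $\omega_2^2 - \omega_1^2$; solving it yields \eqref{in.expf} for $f_1$ and $f_2$ on all of $D$, since the absorption enters only the imaginary part. For the imaginary part I would split by region: on $D_0$ the absorption term vanishes and $f_3 = \omega_1^{-1}\Im q_{\omega_1}$ as in \eqref{in.expf}; on $D_1$ I would first recover $\zeta(x)$ pointwise from \eqref{in.zetaeq}, whose right-hand side is strictly decreasing in $\zeta(x) \in (0,+\infty)$ by Lemma 2, so the equation has at most one solution, while existence holds because the data were generated by the true $\zeta(x)$. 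With $\zeta$ in hand, $\Im q_\omega = \omega f_3 - 2\omega^{1+\zeta}\tfrac{\alpha_0}{c}$ at $\omega_1,\omega_2$ is a linear system in $(f_3, \alpha_0/c)$ with determinant $2\omega_1\omega_2(\omega_1^\zeta - \omega_2^\zeta) \neq 0$, giving \eqref{in.alexp}.

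With $f_1$, $f_2$, $f_3$ known on all of $D$, the remaining recovery is precisely steps 2--5 of Theorem \ref{propna}: solve \eqref{in.rhoeq} for $g$, unique in $C(\overline D)$ by Lemma 1, and set $\rho = g^{-2}$; recover $\Phi|_{\partial D} - \Phi(x^0)$ from \eqref{sol.PhiOnBnd}, using $V$ from \eqref{in.defV}, which is known from $F$, $\widetilde v$, $\widetilde c$; solve \eqref{in.etaeq} for $\eta$, unique since it has the same kernel as \eqref{in.rhoeq}, and set $\Phi - \Phi(x^0) = \rho^{1/2}\eta$; and finally obtain $c$ and $v$ from \eqref{in.expcv}. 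The absorption is then $\alpha_0 = 0$ on $D_0$ and $\alpha_0 = c \cdot (\alpha_0/c)$ on $D_1$.

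The step that requires the most care is the passage between $D_0$ and $D_1$. The formula for $f_3$ is piecewise, but both branches recover the values of the single function $f_3$ of \eqref{in.deff}, so the assembled $f_3$ is globally the true coefficient and inherits its regularity; together with $f_1 \in C^{0,\beta}$ from \eqref{in.rpro} and \eqref{in.deff}, this ensures that the integral-equation machinery of Theorem \ref{propna} applies without loss across the interface $\overline{D_0} \cap \overline{D_1}$. I expect this consistency check, together with the nonlinear pointwise inversion of \eqref{in.zetaeq}, to be the only substantive obstacle; the latter is settled by the strict monotonicity of Lemma 2, while everything else is linear algebra and the already-established unique solvability of \eqref{in.rhoeq} and \eqref{in.etaeq}.
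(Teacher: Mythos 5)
Your proposal is correct and follows essentially the same route as the paper: the decomposition $D_0 = \{\alpha_0 = 0\}$ via \eqref{in.defD0}, pointwise recovery of $\zeta$ from \eqref{in.zetaeq} using the strict monotonicity of Lemma 2, linear inversion giving \eqref{in.expf} and \eqref{in.alexp}, and then reduction to steps 2--5 of Theorem \ref{propna}. The only differences are that you spell out details the paper leaves implicit (the determinant $2\omega_1\omega_2(\omega_1^\zeta - \omega_2^\zeta)$, existence for \eqref{in.zetaeq}, and the consistency of the piecewise-defined $f_3$ across $\overline{D_0} \cap \overline{D_1}$), which strengthen rather than alter the argument.
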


\begin{remark} Note that the formulas and equations presented in Theorem \ref{propna} (resp. \ref{propab}) require the knowledge of the function $q_\omega$ at two (resp. three) frequencies $\omega$. These formulas are exact but they can be not very stable with respect to the noise in the initial data. However, if $q_\omega$ is known for a bigger number of frequencies it is possible to increase the stability of reconstruction by replacing formulas \eqref{in.expf} and \eqref{in.alexp} with their least squares analogues. A numerical study of reconstruction stability will be carried out in a subsequent paper.
\end{remark}

\section{Aknowledgements}
The present article was prepared in the framework of research conducted under the supervision of Prof. R. G. Novikov.

\bibliographystyle{alpha}
\bibliography{biblio_utf}

\begin{thebibliography}{RBKS09}

\bibitem[Aga15a]{Agal2015a}
A.~D. Agaltsov.
\newblock {Finding scattering data for a time-harmonic wave equation with first
  order perturbation from the Dirichlet-to-Neumann map}.
\newblock {\em Journal of Inverse and Ill-Posed Problems}, 23(6):627--645,
  2015.

\bibitem[Aga15b]{Agal2015b}
A.~D. Agaltsov.
\newblock A global uniqueness result for acoustic tomography of moving fluid.
\newblock {\em Bulletin des Sciences Mathematiques}, 139(8):937--942, 2015.

\bibitem[AN14]{Agal2014b}
A.~D. Agaltsov and R.~G. Novikov.
\newblock {R}iemann-{H}ilbert problem approach for two-dimensional flow inverse
  scattering.
\newblock {\em J. Math. Phys.}, 55(10), 2014.
\newblock id 103502.

\bibitem[AN15]{Agal2015c}
A.~D. Agaltsov and R.~G. Novikov.
\newblock Uniqueness and non-uniqueness in acoustic tomography of moving fluid.
\newblock {\em Journal of Inverse and Ill-Posed Problems}, 2015.
\newblock doi:10.1515/jiip-2015-005.

\bibitem[Aub82]{Aub1982}
T.~Aubin.
\newblock {\em {Nonlinear analysis on manifolds. Monge-Amp\`ere equations}}.
\newblock Springer-Verlag, New York, 1982.

\bibitem[BSZR13]{Bur2013en}
V.~A. Burov, A.~S. Shurup, D.~I. Zotov, and O.~D. Rumyantseva.
\newblock Simulation of a functional solution to the acoustic tomography
  problem for data from quasi-point transducers.
\newblock {\em Acoustical Physics}, 59(3):345--360, 2013.

\bibitem[GT01]{Gilb2001}
D.~Gilbarg and N.~S. Trudinger.
\newblock {\em Elliptic Partial Differential Equations of Second Order}.
\newblock Classics in Mathematics. Springer-Verlag, Berlin Heidelberg, 2001.

\bibitem[GT11]{Guil2011}
C.~Guillarmou and L.~Tzou.
\newblock {Identification of a connection from Cauchy data on a Riemann surface
  with boundary}.
\newblock {\em Geom. Funct. Anal.}, 21(2):393--418, 2011.

\bibitem[HN88]{Henk1988}
G.~M. Henkin and R.~G. Novikov.
\newblock A multidimensional inverse problem in quantum and acoustic
  scattering.
\newblock {\em Inv. Problems}, 4:103--121, 1988.

\bibitem[IN13]{Isay2013}
M.~I. Isayev and R.~G. Novikov.
\newblock {Reconstruction of a potential from the impedance boundary map}.
\newblock {\em Eurasian Journal of Mathematical and Computer Applications},
  1(1):5--28, 2013.

\bibitem[KU14]{Krup2014}
K.~Krupchyk and G.~Uhlmann.
\newblock Uniqueness in an inverse boundary problem for a magnetic
  {S}chr{\"o}dinger operator with a bounded magnetic potential.
\newblock {\em Comm. Math. Phys.}, 327(3):993--1009, 2014.

\bibitem[RBKS09]{Roum2009en}
O.~D. Rumyantseva, V.~A. Burov, A.~L. Konyushkin, and N.~A. Sharapov.
\newblock Increased resolution of two-dimensional tomography imaging along the
  transverse coordinate and separate reconstruction of elastic and viscous
  scatterer characteristics.
\newblock {\em Acoustical Physics}, 55(4):613--629, 2009.

\bibitem[RE96]{Rych1996}
M.~N. Rychagov and H.~Ermert.
\newblock Reconstruction of fluid motion in acoustic diffraction tomography.
\newblock {\em J. Acoust. Soc. Am.}, 99(5):3029--3035, 1996.

\bibitem[RW94]{Rous1994}
D.~Roussef and K.~B. Winters.
\newblock Two-dimensional vector flow inversion by diffraction tomography.
\newblock {\em Inv. Problems}, 10:687--697, 1994.

\end{thebibliography}

\end{document}